\pdfoutput=1
\documentclass[journal,twoside,web]{ieeecolor}

\usepackage{generic}
\usepackage{lcsys}

\usepackage{graphicx}      
\usepackage[caption=false]{subfig}
\usepackage{cite}
\usepackage{mondeq}

\usepackage{amsthm}
\usepackage{cleveref}
\usepackage{url}

\newtheorem{definition}{Definition}
\newtheorem{theorem}{Theorem}
\newtheorem{proposition}{Proposition}
\newtheorem{corollary}{Corollary}

\begin{document}

\title{Quantization Robustness of Monotone Operator Equilibrium Networks}

\author{James Li, Philip H.W. Leong, and Thomas Chaffey
\thanks{The authors are with the School of Electrical and Computer Engineering, The University of Sydney, NSW, Australia.
Emails: \texttt{jali4795@uni.sydney.edu.au}, \texttt{\{philip.leong, thomas.chaffey\}@sydney.edu.au}.}
\thanks{\copyright\ 2026 IEEE. Personal use of this material is permitted. Permission from IEEE must be obtained for all other uses, in any current or future media, including reprinting/republishing this material for advertising or promotional purposes, creating new collective works, for resale or redistribution to servers or lists, or reuse of any copyrighted component of this work in other works. This is the authors' accepted version; the version of record will appear in IEEE Control Systems Letters via its DOI.}}

\maketitle
\thispagestyle{empty}

\begin{abstract}

Monotone operator equilibrium networks (MonDEQs) are implicit models whose well-posedness and convergence rely on a single scalar margin computed from the weight matrix, which has made them attractive for learned controllers with closed-loop stability guarantees. When such models are deployed on low-precision hardware, weights are quantized, and these structural guarantees can be destroyed. We provide an explicit quantization-preservation certificate: a single computable check on the weight perturbation determines whether the quantized network retains existence, uniqueness, linear convergence, and a bounded displacement of its equilibrium. The same threshold certifies the backward solve as well as the forward solve, so it governs both training and inference. MNIST experiments confirm a phase transition at the predicted threshold: three- and four-bit post-training quantization diverge, while five-bit and above converge; quantization-aware training recovers provable convergence at four bits.

\end{abstract}

\begin{IEEEkeywords}
Quantization (signal), Neural networks, Robustness, Convergence, Optimization
\end{IEEEkeywords}


\section{Introduction}

\IEEEPARstart{D}{eploying} neural networks as controllers in safety-critical applications requires rigorous behavioral guarantees. Implicit-layer architectures such as monotone operator equilibrium networks (MonDEQs)~\cite{winston_monotone_2021} and the related recurrent equilibrium networks (RENs)~\cite{revay_recurrent_2024} have emerged as models providing such guarantees: RENs have been used to learn nonlinear controllers with closed-loop stability~\cite{junnarkar_synthesis_2022, wang_learning_2022}. However, embedded deployment requires \textbf{quantization}: representing weights and activations at low-bit precision~\cite{nagel_white_2021}. Hardware efficiency and accuracy are conflicting goals, since rounding error grows as bit-precision is reduced. Analytic bounds relating quantization error to a network's robustness would let bit-width be selected based on deployment requirements rather than by trial and error.

This motivates the question of whether quantization error can be bounded at the model level. At present, there is no generally applicable bound on quantization error; instead, only architecture-specific analyses exist~\cite{zhang_qebverif_2023, kabaha_quantization_2025}. Progress therefore requires restricting attention to architectures with tractable convergence guarantees --- a requirement familiar in control, where quantized feedback has been modeled as a sector-bounded perturbation and stability is analyzed via small-gain conditions~\cite{fu_sector_2005}. MonDEQs are a class of deep equilibrium models (DEQs)~\cite{bai_deep_2019} that enforce monotonicity of the underlying operator, guaranteeing existence, uniqueness, and linear convergence of the equilibrium via operator splitting. A MonDEQ layer's well-posedness is captured by a single spectral margin: the smallest eigenvalue $m$ of a symmetric matrix constructed from the layer's weights (defined formally in Section~II). Having $m>0$ ensures the implicit equation has a unique equilibrium that the numerical solver converges to; because quantization perturbs this matrix, the margin $m$ provides a natural handle for analyzing quantization error. To our knowledge, MonDEQ behavior under quantization has not been analyzed.

\subsection{Contributions}

We give the first explicit quantization-preservation certificate for a MonDEQ's structural guarantees: a single computable threshold $\|\Delta W\|_2 < m$ under which existence, uniqueness, linear convergence, and a deterministic displacement bound all hold for the quantized network. The induced perturbation of the monotonicity margin and Lipschitz constant is bounded by specialising the radius theorem for monotone mappings~\cite[Thm.~4]{dontchevRadiusTheoremsMonotone2019} to the MonDEQ setting (Theorem~\ref{thm:weight_quant_bound}, Section~\ref{subsec:margin-wellposed}); this gives explicit conditions under which the quantized MonDEQ retains existence, uniqueness, and linear convergence (Corollary~\ref{cor:quantized_fb_contraction}, instantiating the forward--backward splitting framework of~\cite{bauschke_convex_2017}). The fixed-point displacement between quantized and full-precision equilibria is bounded and converted into a condition number (Theorems~\ref{thm:equilibrium_displacement}--\ref{thm:condition_number}, Section~\ref{subsec:displacement}), sharpening the closest prior MonDEQ Lipschitz analysis~\cite{pabbaraju_estimating_2021} by deriving the perturbed margin from the quantizer bit-width. The same threshold certifies the backward solve as well as the forward solve (Theorem~\ref{thm:backward_quantized}, Section~\ref{subsec:backward}), so a single margin check governs both training and inference. We validate the certificate empirically on a single-layer MonDEQ trained on MNIST across bit-widths from 3 to 32 bits (Section~\ref{sec:experiments}); the experiments test certificate predictiveness against the threshold $\|\Delta W\|_2 < m$, not deployment-scale benchmark performance. Code is available at \url{https://github.com/JLi-Projects/mondeq-quant}.

\subsection{Related Work}

\emph{Quantization theory.}
Standard quantization modeling treats the quantized weight matrix as a bounded perturbation of its full-precision counterpart~\cite{higham_accuracy_2002,nagel_white_2021}; post-training and quantization-aware variants (Section~\ref{sec:experiments}) trade off training cost against achievable bit-width~\cite{jacob_quantization_2017, nagel_white_2021}.
\emph{Inexact operator splitting.}
Operator splitting methods such as forward--backward and Peaceman--Rachford admit inexact variants in which bounded per-step errors are tolerated while preserving convergence~\cite{eckstein_douglasrachford_1992,combettes_proximal_2011}. In Section~\ref{sec:quant-mondeq}, we apply these results to quantization-induced errors in the MonDEQ solver and derive new bounds on equilibrium displacement and the associated condition number.

\emph{Numerical error analysis.}
Beuzeville et al.~\cite{beuzeville_deterministic_2025} prove backward stability of feedforward networks under floating-point rounding; Jonkman et al.~\cite{jonkman_quantisation_2018} model quantized communication in distributed optimization as inexact Krasnosel'ski\u\i--Mann iteration.

\emph{MonDEQ sensitivity.}
Pabbaraju et al.~\cite{pabbaraju_estimating_2021} derive input-output and weight-output Lipschitz bounds for MonDEQs, but their perturbation bound assumes the perturbed margin is known and does not address quantization-specific structure, convergence conditions, or condition number.

\section{Preliminaries}

We collect notation and standard definitions from monotone operator theory that are used throughout the paper.

We work in $\R^n$ with the Euclidean norm $\|\cdot\|_2$ and denote the spectral norm of a matrix by $\|\cdot\|_2$. The symmetric and skew-symmetric components of a matrix $A$ are $\sym(A) := \tfrac{1}{2}(A + A^\top)$ and $\skw(A) := \tfrac{1}{2}(A - A^\top)$.

\emph{Monotone operators.}
Monotonicity generalises positive-definiteness from linear maps to nonlinear ones and is the property that guarantees well-posedness of the fixed-point equations we analyse.
An operator $F:\R^n \to \R^n$ is \emph{monotone} if $\ip{F(x) - F(y)}{x - y} \ge 0$ for all $x,y \in \R^n$, \emph{maximal} if its graph is not properly contained in the graph of any other monotone operator, \emph{$m$-strongly monotone} if $\ip{F(x) - F(y)}{x - y} \ge m\|x - y\|_2^2$, and \emph{$L$-Lipschitz} if $\|F(x) - F(y)\|_2 \le L\|x - y\|_2$. For the affine operator $F(z) = (I - W)z - (Ux + b)$, the strong monotonicity margin is $m = \lmin(\sym(I - W))$ and the Lipschitz constant is $L = \|I - W\|_2$~\cite{ryu_primer_2016,bauschke_convex_2017}.

\emph{Resolvents.}
The resolvent $\res{\alpha G} := (I + \alpha G)^{-1}$ of a maximal monotone $G$ plays the role of the activation function inside the splitting iteration; it is single-valued, firmly nonexpansive, and hence $1$-Lipschitz. The reflected resolvent is $R_{\alpha G} := 2\res{\alpha G} - I$~\cite{bauschke_convex_2017}.

The \emph{forward--backward} iteration $z^{k+1} = \res{\alpha G}\bigl(z^k - \alpha F(z^k)\bigr)$ converges linearly for any $\alpha \in (0, 2m/L^2)$ with contraction modulus $r_{\mathrm{FB}} = \sqrt{1 - 2\alpha m + \alpha^2 L^2}$~\cite{ryu_primer_2016,bauschke_convex_2017}. The \emph{Peaceman--Rachford} iteration $z^{k+1} = (2\res{\alpha G} - I)\bigl((2\res{\alpha F} - I)(z^k)\bigr)$ converges linearly for any $\alpha > 0$ with contraction modulus $\rho_{\mathrm{PR}} = \sqrt{1 - \tfrac{4\alpha m}{(1+\alpha L)^2}}$~\cite{ryu_primer_2016,bauschke_convex_2017}.

\section{Monotone Operator Equilibrium Networks}

Monotone operator equilibrium networks (MonDEQs)~\cite{winston_monotone_2021} compute their output as the fixed point of a splitting map derived from a monotone inclusion. We summarize the key definitions.

\begin{definition}\label{def:mondeq}
Fix an input $x \in \R^d$. Let $W\in\R^{n\times n}$, $U\in\R^{n\times d}$ and $b\in\R^{n}$ be parameters collected in a vector $\boldsymbol{\vartheta} \in \R^r$. Define the affine map
\begin{equation*}
    F(z)\;:=\;(I-W)z-(Ux+b),\qquad z\in\R^{n}.
\end{equation*}
Let $G : \R^n \rightrightarrows \R^n$ be a maximal monotone operator and let $\res{\alpha G}:=(I+\alpha G)^{-1}$ denote its resolvent for any $\alpha>0$. Considering the nonlinear fixed point iteration
\begin{equation*}
    z^{k+1} = \res{\alpha G}\bigl(z^k - \alpha F(z^k)\bigr) := \Phi(z^k; \boldsymbol{\vartheta}),
\end{equation*}
suppose it has a fixed point $\zstar$. We call the mapping from the input $x$ to fixed point $\zstar$ a \textbf{monotone operator equilibrium network} (MonDEQ).
\end{definition}

A MonDEQ thus replaces the layer stack of a feedforward network with a single nonlinear fixed-point equation: the iteration $\Phi$ alternates an affine pre-activation step (parameterised by $W$, $U$, $b$) with the resolvent of $G$ (which plays the role of the activation), and the network's output is its equilibrium. The following equivalence~\cite{winston_monotone_2021} recasts this fixed point as the solution of a monotone inclusion, opening the door to operator-splitting theory.

\begin{theorem}\label{thm:monotone_inclusion}
Define a MonDEQ as in Definition~\ref{def:mondeq}. Then $\zstar \in \Fix(\Phi) \Longleftrightarrow 0 \in F(\zstar) + G(\zstar)$.
\end{theorem}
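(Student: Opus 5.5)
The plan is to unfold the definition of the resolvent and rewrite the fixed-point equation as an inclusion. The one fact needed is the characterization of the resolvent as a relation: for maximal monotone $G$ and $\alpha>0$, the resolvent $\res{\alpha G}=(I+\alpha G)^{-1}$ is single-valued and defined on all of $\R^n$. This follows from Minty's theorem together with the firm nonexpansiveness recalled in the Preliminaries. Consequently, for any $u,z\in\R^n$,
\begin{equation*}
z=\res{\alpha G}(u)\quad\Longleftrightarrow\quad u\in z+\alpha G(z).
\end{equation*}
I would state this equivalence first. The direction ``$\Leftarrow$'' is exactly where single-valuedness is used: if $u\in(I+\alpha G)(z)$, then $z$ belongs to $(I+\alpha G)^{-1}(u)$, and this set is the singleton $\{\res{\alpha G}(u)\}$.

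Next I apply this equivalence with $u=z^\star-\alpha F(z^\star)$ and $z=z^\star$. This gives
\begin{equation*}
z^\star=\Phi(z^\star;\boldsymbol{\vartheta})\;\Longleftrightarrow\; z^\star-\alpha F(z^\star)\in z^\star+\alpha G(z^\star)\;\Longleftrightarrow\; -\alpha F(z^\star)\in\alpha G(z^\star).
\end{equation*}
Since $\alpha>0$, dividing by $\alpha$ is harmless for set-valued images, because $\alpha G(z)=\{\alpha v: v\in G(z)\}$. The last inclusion is therefore equivalent to $-F(z^\star)\in G(z^\star)$, that is, $0\in F(z^\star)+G(z^\star)$. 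Each step is reversible, so both implications follow at once. A byproduct worth noting is that the fixed-point set does not depend on the choice of $\alpha>0$.

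The only delicate point is the justification of single-valuedness and full domain of the resolvent, which I take from the Preliminaries and Minty's theorem. Everything else is an algebraic manipulation of set inclusions. Notably, no property of $F$ beyond being a single-valued map is used, so the affine structure and the monotonicity of $I-W$ play no role in this equivalence. They matter only for the existence and uniqueness results that come later.
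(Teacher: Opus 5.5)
Your proof is correct and is the standard argument. The paper gives no proof of its own and simply cites Winston and Kolter~\cite{winston_monotone_2021}, whose proof unfolds the resolvent characterization $z=(I+\alpha G)^{-1}(u)\Leftrightarrow u-z\in\alpha G(z)$ at $u=z^\star-\alpha F(z^\star)$ in exactly this way (they state it for $G=\partial f$ and $\alpha=1$), and your version covers the general maximal monotone $G$ and arbitrary $\alpha>0$ that the paper's Definition uses.
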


Theorem~\ref{thm:monotone_inclusion} reduces computation of the MonDEQ output to solving the monotone inclusion $0 \in F(\zstar) + G(\zstar)$. This reformulation is useful because the splitting algorithms of monotone operator theory apply directly and converge linearly when $F$ is strongly monotone. The choice of $G$ encodes the activation: when $G = \partial\rho$ for proper, closed, convex $\rho$, the resolvent $\res{\alpha G} = \operatorname{prox}_{\alpha\rho}$ acts as the activation function in the splitting iteration, with $\rho$ the indicator of $\R_{\ge 0}^n$ recovering the rectified linear unit (ReLU) activation~\cite{winston_monotone_2021}.

Any $W$ guaranteeing $F$ is $m$-strongly monotone can be written in the form below, giving a constructive recipe for training MonDEQs with a target margin built in.

\begin{proposition}\label{prop:weight_mtrx}
    $\sym(I - W) \succeq mI$ if and only if there exist $A,B \in \R^{n \times n}$ such that $W \;=\; (1 - m)I - A^\top A + B - B^\top$.
\end{proposition}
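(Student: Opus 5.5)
The plan is to prove the two directions separately, working with the identity
\begin{equation*}
\sym(I - W) = I - \sym(W).
\end{equation*}
The key observation is that $B - B^\top$ is skew-symmetric for any $B$, so it contributes nothing to the symmetric part. The matrix $A^\top A$, in turn, is symmetric positive semidefinite.

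For the ``if'' direction, suppose $W = (1-m)I - A^\top A + B - B^\top$. Since $\sym(B - B^\top) = 0$, we have $\sym(W) = (1-m)I - A^\top A$. Hence
\begin{equation*}
\sym(I - W) = I - (1-m)I + A^\top A = mI + A^\top A \succeq mI,
\end{equation*}
because $x^\top A^\top A x = \|Ax\|_2^2 \ge 0$ for every $x$.

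For the ``only if'' direction, suppose $\sym(I-W) \succeq mI$, which is equivalent to $S := \sym(I - W) - mI \succeq 0$. Since $S$ is symmetric positive semidefinite, it admits a square root factorization $S = A^\top A$. One can take $A = S^{1/2}$ via the spectral decomposition $S = Q\Lambda Q^\top$, $A = \Lambda^{1/2} Q^\top$; a Cholesky-type factor also works. This gives $A \in \R^{n\times n}$. For the skew part, set $B := -\tfrac12 W^\top$, so that
\begin{equation*}
B - B^\top = \tfrac12 (W - W^\top) = \skw(W).
\end{equation*}
Now recombine. Since $\sym(W) = I - \sym(I-W) = I - mI - A^\top A = (1-m)I - A^\top A$, we get
\begin{equation*}
W = \sym(W) + \skw(W) = (1-m)I - A^\top A + B - B^\top.
\end{equation*}

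The only potentially delicate step is the existence of a real square factor $A$ with $A^\top A = S$. This is precisely where positive semidefiniteness, rather than mere symmetry, is needed: the eigenvalues of $S$ must be nonnegative for $\Lambda^{1/2}$ to be real. The rest is bookkeeping with the symmetric/skew decomposition $W = \sym(W) + \skw(W)$.
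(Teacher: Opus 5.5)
Your proof is correct and is essentially the ``direct computation'' the paper defers to Winston et al. The skew term $B - B^\top$ vanishes under $\sym(\cdot)$, which gives the ``if'' direction. For the converse, you factor the positive semidefinite matrix $\sym(I-W) - mI$ as $A^\top A$ and absorb the skew part of $W$ into $B - B^\top$, which is exactly what that computation consists of.
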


\begin{proof}
Direct computation~\cite{winston_monotone_2021}.
\end{proof}

The margin $m$ is determined by the parameterization of $W$. Because $m = \lmin(\sym(I - W))$ is an explicit function of $W$, perturbing the weight matrix perturbs $m$ in a way that can be bounded analytically. Since $m > 0$ is both necessary and sufficient for well-posedness, bounding how quantization perturbs $m$ directly determines whether the quantized network remains well-posed.

\section{Quantization in a MonDEQ}\label{sec:quant-mondeq}

Here, quantization replaces floating-point weights with fixed-point (low-bit) approximations, reducing memory and enabling efficient integer arithmetic at the cost of increased rounding error. We analyze the resulting error as a perturbation of the weight matrix $W \to \Wq = W + \DW$~\cite{higham_accuracy_2002}, bounding its effect on well-posedness, the equilibrium point, and the backward pass used for training.

We use symmetric uniform (mid-tread) quantization: for $b$-bit representation with weights in $[-1,1]$, the quantizer $Q_\Delta(w) = \Delta \cdot \mathrm{round}(w/\Delta)$ has step size $\Delta = 2^{1-b}$ and worst-case elementwise error $\Delta/2$. Uniform quantization is standard for weight compression because the evenly spaced levels map directly to fixed-point integer formats, enabling hardware-accelerated matrix arithmetic; non-uniform schemes such as logarithmic quantizers~\cite{fu_sector_2005} sacrifice this property. Since each entry of $\Delta W$ is bounded by $\Delta/2$, we have $\norm{\DW}_2 \le (\Delta/2)n$ for the square $n \times n$ weight matrix. This motivates modeling weight quantization as a bounded perturbation~\cite{higham_accuracy_2002}.

\begin{definition}\label{def:weight_quant}
    Given a MonDEQ as in Definition~\ref{def:mondeq}, its \emph{quantized counterpart} replaces $W$ with $\Wq = W + \DW$, $\|\DW\|_2 \le \varepsilon_W$.
\end{definition}
For the symmetric uniform quantizer with step size $\Delta = 2^{1-b}$ at $b$~bits, $\varepsilon_W = n\Delta/2$.

Weight quantization introduces a deterministic perturbation to the weight matrix. This raises the question of how large the perturbation can be before the equilibrium ceases to exist. In practice, each iterate also incurs computational errors such as finite-precision arithmetic or activation rounding, so the computed iterates obey $z^{k+1} = \widetilde{\Phi}(z^k) + \delta_k$ with bounded per-step errors $\delta_k$. Together, the weight perturbation $\DW$ and the iterate errors $\delta_k$ model the two sources of error in a quantized MonDEQ.

\subsection{Margin Perturbation and Well-Posedness}\label{subsec:margin-wellposed}

The following theorem shows that weight perturbation reduces the monotonicity margin by at most $\norm{\DW}_2$. This theorem is a specialisation of the radius theorem for monotone mappings~\cite[Theorem~4]{dontchevRadiusTheoremsMonotone2019}.

\begin{theorem}\label{thm:weight_quant_bound}
    Define a MonDEQ in accordance with Definition~\ref{def:mondeq} with weights $W$ satisfying Proposition~\ref{prop:weight_mtrx}. Let $\Wq$ be the quantized weights with perturbation $\|\DW\|_2\le\varepsilon_W$, and let $\Fq(z) := (I - \Wq)z - (Ux + b)$ denote the corresponding quantized affine operator. Then the margin $\mq := \lmin(\sym(I - \Wq))$ of $\Fq$ is bounded below by
    \begin{equation*}
        \mq \;\ge\; m - \|\DW\|_2,
    \end{equation*}
    and the Lipschitz constant $\Lq$ of $\Fq$ satisfies $|L-\|\DW\|_2| \le \Lq \le L+\|\DW\|_2$. In particular, $\Fq$ is strongly monotone (with margin $\mq > 0$) whenever $\|\DW\|_2 < m$.
\end{theorem}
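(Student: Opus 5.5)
The plan is to reduce everything to elementary eigenvalue and norm perturbation facts about the matrices $I-W$ and $I-\Wq = (I-W) - \DW$. No general radius theorem is needed beyond what Weyl's inequality gives in this affine setting.

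\emph{Margin bound.} Write $\sym(I-\Wq) = \sym(I-W) - \sym(\DW)$. Both terms are symmetric, so Weyl's inequality gives
\[
\lmin(\sym(I-\Wq)) \ge \lmin(\sym(I-W)) - \lambda_{\max}(\sym(\DW)) \ge m - \|\sym(\DW)\|_2 .
\]
To finish, we bound $\|\sym(\DW)\|_2 = \tfrac12\|\DW+\DW^\top\|_2 \le \tfrac12(\|\DW\|_2+\|\DW^\top\|_2) = \|\DW\|_2$, using $\|\DW^\top\|_2=\|\DW\|_2$. We can avoid Weyl entirely with a Rayleigh-quotient argument. For unit $z$,
\[
z^\top(I-\Wq)z = z^\top(I-W)z - z^\top \DW z \ge m - \|\DW\|_2
\]
by Cauchy--Schwarz, and minimizing over $z$ gives the claim. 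I would present this version because it matches the monotone-operator definition directly: $\ip{\Fq(z)-\Fq(y)}{z-y} = (z-y)^\top(I-\Wq)(z-y)$, and the affine offset cancels. The hypothesis from Proposition~\ref{prop:weight_mtrx} is used only to identify $m$ as a valid margin of $F$.

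\emph{Lipschitz bounds.} Since $\Fq$ is affine, its Lipschitz constant is exactly $\Lq = \|I-\Wq\|_2 = \|(I-W)-\DW\|_2$. The triangle inequality gives $\Lq \le \|I-W\|_2 + \|\DW\|_2 = L + \|\DW\|_2$. The reverse triangle inequality gives $\Lq \ge \bigl|\,\|I-W\|_2 - \|\DW\|_2\bigr| = |L-\|\DW\|_2|$. Finally, if $\|\DW\|_2 < m$, the margin bound yields $\mq > 0$, so $\Fq$ is $\mq$-strongly monotone.

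There is no real obstacle here. The only point needing care is the factor in $\|\sym(\DW)\|_2\le\|\DW\|_2$, and the Rayleigh-quotient route sidesteps it. I would also remark that this is the linear, single-valued instance of \cite[Theorem~4]{dontchevRadiusTheoremsMonotone2019}.
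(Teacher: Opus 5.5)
Your proposal is correct and follows essentially the same route as the paper: a Rayleigh-quotient lower bound on $\sym(I-\Wq) = \sym(I-W) - \sym(\DW)$ for the margin, and the triangle and reverse triangle inequalities applied to $\|(I-W)-\DW\|_2$ for the Lipschitz constant. The only cosmetic difference is that you bound $z^\top \DW z$ directly by Cauchy--Schwarz, whereas the paper goes through $\|\sym(\DW)\|_2 \le \|\DW\|_2$.
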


\begin{proof}
$\sym(I - \Wq) = \sym(I-W) - \sym(\DW)$, so by Rayleigh~\cite{Horn_Johnson_1985},
\begin{align*}
  \mq
    &= \min_{\norm{x}_2=1} x^\top\bigl[\sym(I-W) - \sym(\DW)\bigr]x \\
    &\ge m - \norm{\sym(\DW)}_2 \ge m - \norm{\DW}_2.
\end{align*}
For the Lipschitz constant, the triangle and reverse triangle inequalities applied to $\Lq = \norm{I-\Wq}_2 = \norm{(I-W) - \DW}_2$ give the stated bounds.
\end{proof}

If $\norm{\DW}_2 < m$ then $\mq > 0$ and the equilibrium is preserved; in the worst case the condition number $\widetilde\kappa = \Lq/\mq$ degrades from both sides, slowing convergence. The margin is the binding constraint in practice: $\sym(I-W) = mI + A^\top A$ attains $m$ exactly wherever $A^\top A$ has a zero eigenvalue, while $L = \norm{I-W}_2$ is robust to elementwise rounding. The Peaceman--Rachford analogue substitutes $\rho_{\mathrm{PR}}(\alpha;\mq,\Lq)$.

\begin{corollary}\label{cor:quantized_fb_contraction}
If $\varepsilon_W < m$ and $\alpha \in (0, 2\mq/\Lq^2)$, the quantized forward--backward map $\widetilde\Phi_{\mathrm{FB}}(z) := \res{\alpha G}\bigl(z - \alpha \Fq(z)\bigr)$ is a contraction with modulus $r_{\mathrm{FB}}(\alpha;\mq,\Lq)$.
\end{corollary}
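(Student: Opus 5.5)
The plan is to reduce the corollary to the unperturbed forward--backward contraction result from the Preliminaries, applied with the quantized operator $\Fq$ in place of $F$. First, I will invoke Theorem~\ref{thm:weight_quant_bound}: since $\|\DW\|_2 \le \varepsilon_W < m$, it gives $\mq \ge m - \varepsilon_W > 0$, so $\Fq$ is $\mq$-strongly monotone. $\Fq$ is affine with linear part $I - \Wq$, so it is $\Lq$-Lipschitz with $\Lq = \|I-\Wq\|_2$. Also $\Lq \ge \mq > 0$, because $\mq \le x^\top(I-\Wq)x \le \|I-\Wq\|_2$ for unit $x$. Hence the step-size interval $(0, 2\mq/\Lq^2)$ is nonempty and well defined. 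The resolvent $\res{\alpha G}$ is unchanged by quantization, since $G$ does not depend on $W$, and it remains firmly nonexpansive and hence $1$-Lipschitz.

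Next, I will verify the contraction directly rather than only citing it, to make the dependence on $(\mq,\Lq)$ explicit. For $z,y \in \R^n$, nonexpansiveness of $\res{\alpha G}$ gives
\begin{equation*}
\|\widetilde\Phi_{\mathrm{FB}}(z)-\widetilde\Phi_{\mathrm{FB}}(y)\|_2 \le \|(z-y) - \alpha(\Fq(z)-\Fq(y))\|_2 .
\end{equation*}
Expanding the square of the right-hand side gives three terms: $\|z-y\|_2^2$, a cross term $-2\alpha\ip{\Fq(z)-\Fq(y)}{z-y}$, and $\alpha^2\|\Fq(z)-\Fq(y)\|_2^2$. I bound the cross term using strong monotonicity and the last term using the Lipschitz bound. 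This yields the squared modulus $1 - 2\alpha\mq + \alpha^2\Lq^2 = r_{\mathrm{FB}}(\alpha;\mq,\Lq)^2$.

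It remains to check that this modulus lies in $[0,1)$. It is strictly less than $1$ exactly when $\alpha\Lq^2 < 2\mq$, which is the stated step-size condition. It is nonnegative because $1-2\alpha\mq+\alpha^2\Lq^2 \ge (1-\alpha\mq)^2$, using $\Lq \ge \mq$. Hence the square root is real and the map is a strict contraction. Banach's fixed point theorem together with Theorem~\ref{thm:monotone_inclusion} then gives existence and uniqueness of the quantized equilibrium, as a remark.

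There is no serious obstacle. The only delicate point is bookkeeping: the hypothesis is stated in terms of the worst-case bound $\varepsilon_W$, while the contraction is in terms of the actual $\mq$ and $\Lq$. Theorem~\ref{thm:weight_quant_bound} bridges the two, and it is worth noting the sufficient, computable step-size choice $\alpha < 2(m-\varepsilon_W)/(L+\varepsilon_W)^2$.
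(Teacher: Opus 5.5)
Your proposal is correct and follows the paper's route. The paper's proof just substitutes $(\mq,\Lq)$ from Theorem~\ref{thm:weight_quant_bound} into the standard forward--backward rate $r_{\mathrm{FB}}$. You do the same, but also re-derive that rate from nonexpansiveness of $\res{\alpha G}$ and check what the paper leaves implicit: $\Lq \ge \mq > 0$, so the step-size interval is nonempty and $1-2\alpha\mq+\alpha^2\Lq^2 \in [0,1)$.
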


\begin{proof}
    Replace $(m, L)$ by $(\mq, \Lq)$ from Theorem~\ref{thm:weight_quant_bound} in the forward--backward convergence rate.
\end{proof}

In words, provided $\varepsilon_W < m$, weight quantization slows but does not break convergence: the solver still reaches a unique equilibrium, and the next subsection bounds how far that equilibrium moves. Larger perturbations can drive $\mq \le 0$ and break convergence, as in the 4-bit case of Section~\ref{sec:experiments}.

\subsection{Equilibrium Displacement}\label{subsec:displacement}

Convergence guarantees the quantized solver reaches \emph{some} fixed point, but a controller deployed at low precision needs to know how far that fixed point has moved from the one the controller was designed for. The next result bounds the displacement $\norm{\zqstar - \zstar}_2$ in terms of the perturbation size and the (unperturbed) margin.

\begin{theorem}
\label{thm:equilibrium_displacement}
Assume $F(z) = (I-W)z - (Ux+b)$ is $m$-strongly monotone and $G:\R^n \rightrightarrows \R^n$ is monotone. With $\Wq$ given as in Definition~\ref{def:weight_quant}, suppose $\|\DW\|_2 < m$ (in particular $\mq > 0$). Let
\begin{equation*}
    \Fq(z) := (I-\Wq)z - (Ux+b) = F(z) - \DW z.
\end{equation*}
Let $\zstar$ and $\zqstar$ denote the (unique) solutions of the full-precision and quantized inclusions
\begin{equation*}
    0 \in F(\zstar) + G(\zstar),\qquad
    0 \in \Fq(\zqstar) + G(\zqstar).
\end{equation*}
Then
\begin{equation}
\label{eq:equilibrium_displacement_bound}
    \|\zqstar - \zstar\|_2
    \;\le\;
    \frac{\|\DW\|_2}{m}\,\|\zqstar\|_2.
\end{equation}
\end{theorem}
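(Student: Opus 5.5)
The plan is the standard strong-monotonicity perturbation argument: write both inclusions as membership statements in $G$, pair them via the monotonicity of $G$, and then use the $m$-strong monotonicity of the \emph{unperturbed} operator $F$ to absorb the difference. From the two inclusions we extract $-F(\zstar)\in G(\zstar)$ and $-\Fq(\zqstar) = -F(\zqstar) + \DW\zqstar \in G(\zqstar)$. Monotonicity of $G$ applied to these two graph points gives
\begin{equation*}
\ip{-F(\zqstar) + \DW\zqstar + F(\zstar)}{\zqstar - \zstar} \;\ge\; 0 .
\end{equation*}

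Rearranging, this says
\begin{equation*}
\ip{F(\zqstar) - F(\zstar)}{\zqstar - \zstar} \le \ip{\DW\zqstar}{\zqstar-\zstar}.
\end{equation*}
The left side is bounded below by $m\|\zqstar-\zstar\|_2^2$ by strong monotonicity of $F$. The right side is bounded above by Cauchy--Schwarz and the definition of the spectral norm by $\|\DW\|_2\|\zqstar\|_2\|\zqstar-\zstar\|_2$. If $\zqstar=\zstar$ the claim is trivial. Otherwise we divide by $m\|\zqstar-\zstar\|_2>0$ and obtain \eqref{eq:equilibrium_displacement_bound}.

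The only point needing care is existence and uniqueness of $\zqstar$ and $\zstar$, which the statement presupposes. I would justify it by noting that $\Fq$ is $\mq$-strongly monotone with $\mq\ge m-\|\DW\|_2>0$ by Theorem~\ref{thm:weight_quant_bound}. It is also continuous and affine, hence maximal monotone, so $\Fq+G$ is strongly monotone and maximal, provided $G$ is maximal as in Definition~\ref{def:mondeq}. Its zero therefore exists and is unique, and similarly for $F+G$. Note that the bound itself only uses $m$, not $\mq$; the hypothesis $\|\DW\|_2<m$ enters solely through well-posedness of the quantized inclusion.
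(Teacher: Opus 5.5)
Your proof is correct and follows essentially the same route as the paper's. Both pair the two inclusions through monotonicity of $G$, lower-bound the resulting inequality using $m$-strong monotonicity of the unperturbed $F$, and apply Cauchy--Schwarz to the $\Delta W$ term. Your well-posedness justification (which needs $G$ maximal, as in Definition~\ref{def:mondeq}) and your separate handling of the case of equal equilibria before dividing are sound refinements that the paper leaves implicit.
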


\begin{proof}
Pick $g^\star \in G(\zstar)$, $\widetilde g^\star \in G(\zqstar)$ with $F(\zstar)+g^\star = 0$, $\Fq(\zqstar)+\widetilde g^\star = 0$. Subtracting (using $\Fq = F - \DW$) and taking the inner product with $\delta z := \zqstar - \zstar$,
\begin{equation*}
    \ip{F(\zqstar) - F(\zstar)}{\delta z}
    - \ip{\DW \zqstar}{\delta z}
    + \ip{\widetilde g^\star - g^\star}{\delta z} = 0.
\end{equation*}
The first term is $\ge m\norm{\delta z}_2^2$ ($m$-strong monotonicity of $F$); the third is $\ge 0$ (monotonicity of $G$). Hence $m\norm{\delta z}_2^2 \le \norm{\DW}_2\norm{\zqstar}_2\norm{\delta z}_2$ by Cauchy--Schwarz; dividing yields~\eqref{eq:equilibrium_displacement_bound}.
\end{proof}

The bound~\eqref{eq:equilibrium_displacement_bound} depends on $\norm{\zqstar}_2$ rather than $\norm{\zstar}_2$ because the perturbation acts through the shifted fixed point. Exchanging $F$ and $\Fq$ in the proof gives the symmetric bound $\norm{\zstar - \zqstar}_2 \le (\norm{\DW}_2/\mq)\,\norm{\zstar}_2$. An explicit \emph{relative} bound in terms of $\norm{\zstar}_2$ alone is given in Corollary~\ref{cor:relative_forward_error}.

For hardware deployment, $U$ and $b$ are also quantized; the same argument with $\Delta u := \Delta U\,x + \Delta b$ extends the bound.

\begin{corollary}\label{cor:full_parameter_displacement}
Under the hypotheses of Theorem~\ref{thm:equilibrium_displacement} with $\widetilde U = U + \Delta U$, $\widetilde b = b + \Delta b$, and $\Delta u := \Delta U\,x + \Delta b$,
$\norm{\zstar - \zqstar}_2 \le (\norm{\DW}_2\norm{\zqstar}_2 + \norm{\Delta u}_2)/m$.
\end{corollary}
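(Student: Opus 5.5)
The plan is to repeat the variational argument from the proof of Theorem~\ref{thm:equilibrium_displacement}, now carrying the extra input-dependent term $\Delta u$. With $\widetilde U$ and $\widetilde b$ in place of $U$ and $b$, the quantized affine operator becomes
\begin{equation*}
    \Fq(z) = (I-\Wq)z - (\widetilde U x + \widetilde b) = F(z) - \DW z - \Delta u .
\end{equation*}
This differs from $F$ by the affine perturbation $-\DW z - \Delta u$. The linear part of $\Fq$ is unchanged from Theorem~\ref{thm:weight_quant_bound}, so the margin and Lipschitz bounds still apply. In particular $\mq \ge m - \norm{\DW}_2 > 0$, and $\Fq + G$ is strongly monotone, so the quantized equilibrium $\zqstar$ exists and is unique. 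I will take existence and uniqueness as given, exactly as in Theorem~\ref{thm:equilibrium_displacement}.

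The key steps, in order, are as follows.
\begin{enumerate}
\item Choose $g^\star \in G(\zstar)$ and $\widetilde g^\star \in G(\zqstar)$ with $F(\zstar) + g^\star = 0$ and $\Fq(\zqstar) + \widetilde g^\star = 0$.
\item Subtract the two equations and substitute the expression for $\Fq$ above. Taking the inner product with $\delta z := \zqstar - \zstar$ gives
\begin{equation*}
    \ip{F(\zqstar)-F(\zstar)}{\delta z} - \ip{\DW\zqstar + \Delta u}{\delta z} + \ip{\widetilde g^\star - g^\star}{\delta z} = 0 .
\end{equation*}
\item Bound the first term below by $m\norm{\delta z}_2^2$ using $m$-strong monotonicity of $F$. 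Drop the third term, which is nonnegative by monotonicity of $G$.
\item Apply Cauchy--Schwarz and the triangle inequality to the middle term, giving $\norm{\DW\zqstar + \Delta u}_2 \le \norm{\DW}_2\norm{\zqstar}_2 + \norm{\Delta u}_2$.
\item If $\delta z \neq 0$, divide by $m\norm{\delta z}_2$. If $\delta z = 0$, the bound is trivial.
\end{enumerate}
This yields the stated inequality. Since $\norm{\zstar - \zqstar}_2 = \norm{\delta z}_2$, the sign convention in the statement causes no issue.

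I expect no real obstacle. The only point needing care is bookkeeping of signs: $\Delta u$ enters $\Fq$ with a minus sign, like the $\DW z$ term. Because the argument only uses the absolute value of the middle inner product, the sign does not affect the bound. It is also worth noting that the hypothesis $\norm{\DW}_2 < m$ is used only for well-posedness of the quantized problem, not in the estimate itself, which uses the unperturbed margin $m$.
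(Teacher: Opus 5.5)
Your proposal is correct and matches the paper's proof: both rerun the argument of Theorem~\ref{thm:equilibrium_displacement} with $\Fq(z) - F(z) = -\DW z - \Delta u$, then absorb the extra $\ip{\Delta u}{\delta z}$ term via Cauchy--Schwarz and the triangle inequality. Your side remark that $\zqstar$ exists because $\Fq + G$ is strongly monotone also needs $G$ to be maximal monotone (as in Definition~\ref{def:mondeq}), but the statement takes $\zqstar$ as given, so the estimate is unaffected.
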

\begin{proof}
The argument of Theorem~\ref{thm:equilibrium_displacement} applies with $\Fq(z) - F(z) = -\DW z - \Delta u$; Cauchy--Schwarz on the additional $\ip{\Delta u}{\delta z}$ term yields the extra $\norm{\Delta u}_2$.
\end{proof}
In words, quantizing $U$ and $b$ shifts the equilibrium by at most $\norm{\Delta u}_2/m$ but does not threaten convergence: the margin and Lipschitz constant of Theorem~\ref{thm:weight_quant_bound} depend only on $(I - \Wq)$, so Corollary~\ref{cor:quantized_fb_contraction}'s convergence guarantee extends verbatim. The next result accounts for the second error source, \emph{iterate quantization}: per-step residuals from finite-precision arithmetic or activation rounding.

\begin{corollary}
\label{cor:inexact_quantized_iteration}
Let $\widetilde{\Phi}$ be a quantized map as in Corollary~\ref{cor:quantized_fb_contraction}, with contraction modulus $r \in (0,1)$ and fixed point $\zqstar$. Then
\begin{equation*}
    \limsup_{k\to\infty}\|z^k - \zqstar\|_2
    \;\le\;
    \frac{\limsup_{k\to\infty}\|\delta_k\|_2}{1 - r}.
\end{equation*}
If $\sum_{k=0}^{\infty}\norm{\delta_k}_2 < \infty$, then $z^k \to \zqstar$ exactly.
\end{corollary}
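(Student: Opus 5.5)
The plan is the standard perturbed-contraction recursion. Let $e_k := \|z^k - \zqstar\|_2$. The computed iterates satisfy $z^{k+1} = \widetilde\Phi(z^k) + \delta_k$, and $\zqstar = \widetilde\Phi(\zqstar)$. Since $\widetilde\Phi$ is an $r$-contraction (Corollary~\ref{cor:quantized_fb_contraction}), the triangle inequality gives
\begin{equation*}
    e_{k+1} \le \|\widetilde\Phi(z^k) - \widetilde\Phi(\zqstar)\|_2 + \|\delta_k\|_2 \le r\,e_k + \|\delta_k\|_2 .
\end{equation*}
Unrolling this recursion yields
\begin{equation*}
    e_{k} \le r^{k} e_0 + \sum_{j=0}^{k-1} r^{k-1-j}\|\delta_j\|_2 .
\end{equation*}
Everything else is analysis of this discrete convolution of a geometric kernel with the error sequence.

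For the $\limsup$ bound, set $\bar\delta := \limsup_k \|\delta_k\|_2$. If $\bar\delta = \infty$ there is nothing to prove, so assume it is finite. Fix $\eta > 0$ and choose $K$ such that $\|\delta_j\|_2 \le \bar\delta + \eta$ for all $j \ge K$. Restart the unrolled inequality at time $K$:
\begin{equation*}
    e_k \le r^{k-K} e_K + (\bar\delta + \eta)\sum_{i\ge 0} r^i = r^{k-K} e_K + \frac{\bar\delta+\eta}{1-r} .
\end{equation*}
Here $e_K$ is finite, and $r^{k-K}\to 0$ because $r<1$. Hence $\limsup_k e_k \le (\bar\delta+\eta)/(1-r)$, and letting $\eta \downarrow 0$ gives the claim. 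This step, handling the $\limsup$ rather than a uniform bound, is the only mildly delicate point, and the restart-at-$K$ device resolves it. An alternative is to apply $\limsup$ directly to $e_{k+1}\le r e_k + \|\delta_k\|_2$ and use subadditivity of $\limsup$, which gives $\ell \le r\ell + \bar\delta$ with $\ell := \limsup_k e_k$. That route requires first showing $\ell<\infty$, which the unrolled bound with bounded $\delta_j$ provides.

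For the summable case, apply the unrolled bound directly. The term $r^k e_0 \to 0$. The convolution $\sum_{j<k} r^{k-1-j}\|\delta_j\|_2$ also tends to zero, since the convolution of an $\ell^1$ sequence with a sequence tending to zero tends to zero. Concretely, split the sum at $j = \lfloor k/2\rfloor$:
\begin{itemize}
\item the early part is at most $r^{k/2-1}\sum_j\|\delta_j\|_2 \to 0$;
\item the late part is at most $\frac{1}{1-r}\sup_{j\ge k/2}\|\delta_j\|_2 \to 0$, since summability forces $\|\delta_j\|_2\to 0$.
\end{itemize}
Hence $e_k \to 0$, that is, $z^k \to \zqstar$. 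Alternatively, summability gives $\bar\delta = 0$, and the first part then yields the conclusion immediately. This is a weaker hypothesis that suffices, so the summability assumption can be noted as sufficient but not necessary.
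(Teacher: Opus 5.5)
Your proof is correct. The paper gives no argument of its own here; it only defers to the inexact fixed-point results in the cited section of Bauschke--Combettes. So what you have written is a self-contained derivation, not a reconstruction of theirs.

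That cited theory is built for nonexpansive or averaged iterations (quasi-Fej\'er monotonicity), where summable errors are the natural hypothesis. Its advantage is generality: it still applies when the map is merely averaged rather than contractive.

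Your route uses contractivity directly through $e_{k+1}\le r e_k+\|\delta_k\|_2$. This is exactly what produces the explicit $1/(1-r)$ factor in the $\limsup$ bound. It also shows that the second claim follows from the first: $\|\delta_k\|_2\to 0$ already forces $\limsup_k e_k=0$. So in the contractive setting the summability hypothesis is stronger than necessary, a sharpening the citation-only proof does not reveal.

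A minor index point: in your convolution split, the late-part supremum should run over $j\ge\lfloor k/2\rfloor$. The one-line reduction via $\bar\delta=0$ makes that split unnecessary anyway.
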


\begin{proof}
Follows from standard inexact contraction results~\cite[Sec.~5.5]{bauschke_convex_2017}.
\end{proof}

In practice, bounded per-step errors do not destroy convergence: the solver reaches a neighbourhood of $\zqstar$ whose radius scales with the error magnitude and contraction rate. Summability $\sum\norm{\delta_k}_2 < \infty$ holds, for example, under an \emph{adaptive quantizer} (one whose step size shrinks across iterations) chosen so that $\norm{\delta_k}_2$ decays geometrically~\cite{jonkman_quantisation_2018}, in which case the total error $\norm{z^k - \zstar}_2 \le \norm{z^k - \zqstar}_2 + \norm{\zqstar - \zstar}_2$ collapses to the displacement bound alone.

The bound~\eqref{eq:equilibrium_displacement_bound} measures displacement in absolute terms. We now derive a relative bound and extract the condition number, which separates the problem's inherent sensitivity from the perturbation size.

\begin{corollary}\label{cor:relative_forward_error}
Under the hypotheses of Theorem~\ref{thm:equilibrium_displacement}, if $\norm{\DW}_2 < m$ then
\begin{equation}\label{eq:relative_forward_error}
    \frac{\norm{\zstar - \zqstar}_2}{\norm{\zstar}_2}
    \;\leq\;
    \frac{\norm{\DW}_2}{m - \norm{\DW}_2}.
\end{equation}
\end{corollary}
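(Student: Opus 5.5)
The plan is to start from the absolute bound of Theorem~\ref{thm:equilibrium_displacement} and remove its dependence on $\norm{\zqstar}_2$ with a triangle inequality. This gives a linear inequality in the displacement $\norm{\zqstar-\zstar}_2$, which we then solve. Write $\varepsilon := \norm{\DW}_2$ and $e := \norm{\zqstar - \zstar}_2$. By the hypothesis $\varepsilon < m$, Theorem~\ref{thm:weight_quant_bound} gives $\mq>0$, so both equilibria exist and are unique. Hence Theorem~\ref{thm:equilibrium_displacement} applies and yields $e \le (\varepsilon/m)\norm{\zqstar}_2$.

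Next, bound $\norm{\zqstar}_2 \le \norm{\zstar}_2 + e$ by the triangle inequality. Substituting gives
\begin{equation*}
    e \;\le\; \frac{\varepsilon}{m}\bigl(\norm{\zstar}_2 + e\bigr),
    \qquad\text{i.e.}\qquad
    \Bigl(1 - \frac{\varepsilon}{m}\Bigr) e \;\le\; \frac{\varepsilon}{m}\norm{\zstar}_2 .
\end{equation*}
Since $\varepsilon < m$, the factor $1-\varepsilon/m$ is strictly positive. Dividing by it and multiplying through by $m$ gives $e \le \varepsilon\,\norm{\zstar}_2/(m-\varepsilon)$. Dividing by $\norm{\zstar}_2$ then gives~\eqref{eq:relative_forward_error}.

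The only delicate point is the degenerate case $\zstar = 0$, where the left-hand side of~\eqref{eq:relative_forward_error} is undefined. We will state that the bound is read for $\zstar\neq 0$, or equivalently in the multiplied-out form $e \le \varepsilon\norm{\zstar}_2/(m-\varepsilon)$. In the multiplied-out form the case $\zstar=0$ forces $e=0$, which is consistent: the absolute bound gives $\norm{\zqstar}_2 \le (\varepsilon/m)\norm{\zqstar}_2$ with $\varepsilon/m<1$, so $\zqstar=0$.

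As a cross-check, the symmetric bound $e \le (\varepsilon/\mq)\norm{\zstar}_2$ noted after Theorem~\ref{thm:equilibrium_displacement}, combined with $\mq \ge m-\varepsilon$ from Theorem~\ref{thm:weight_quant_bound}, gives the same estimate directly. I would mention it as an alternative one-line route.
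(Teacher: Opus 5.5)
Your proof is correct and follows the paper's argument: you take the absolute bound of Theorem~\ref{thm:equilibrium_displacement}, apply the triangle inequality to $\norm{\zqstar}_2$, and rearrange using $\norm{\DW}_2 < m$. Your treatment of the degenerate case $\zstar = 0$ and your one-line alternative via the symmetric bound combined with $\mq \ge m - \norm{\DW}_2$ are both valid, and the paper does not spell either out.
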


\begin{proof}
From Theorem~\ref{thm:equilibrium_displacement},
$\norm{\zqstar - \zstar}_2 \leq \frac{\norm{\DW}_2}{m}\norm{\zqstar}_2$.
Substituting $\norm{\zqstar}_2 \leq \norm{\zstar}_2 + \norm{\zqstar - \zstar}_2$ gives
$\norm{\zqstar - \zstar}_2 \leq \frac{\norm{\DW}_2}{m}(\norm{\zstar}_2 + \norm{\zqstar - \zstar}_2)$.
Rearranging, $(1 - \norm{\DW}_2/m)\norm{\zqstar - \zstar}_2 \leq \frac{\norm{\DW}_2}{m}\norm{\zstar}_2$, which yields~\eqref{eq:relative_forward_error} since $\norm{\DW}_2 < m$.
\end{proof}

Corollary~\ref{cor:relative_forward_error} gives a global bound: the relative displacement is at most $\norm{\DW}_2/(m - \norm{\DW}_2)$, which depends only on the perturbation size and margin. For example, on the trained MonDEQ of Section~\ref{sec:experiments} at 8~bits ($\norm{\DW}_2 = 0.035$, $m = 0.227$), the bound gives $18\%$; the empirical relative error is much smaller (Section~\ref{sec:experiments}). As $\norm{\DW}_2 \to 0$, the bound linearizes to $\norm{\DW}_2/m$, recovering the condition number scaling of Theorem~\ref{thm:condition_number}.

The sensitivity of the equilibrium to small weight perturbations is captured by the condition number~\cite{higham_accuracy_2002, beuzeville_backward_2024}.

\begin{theorem}\label{thm:condition_number}
For an unquantized MonDEQ with margin $m > 0$, the absolute condition number 
\begin{equation*}
    \kappaAbs := \limsup_{\norm{\DW}_2 \to 0} \frac{\norm{\zqstar - \zstar}_2}{\norm{\DW}_2}
\end{equation*}
satisfies $\kappaAbs \le \norm{\zstar}_2 / m$.
\end{theorem}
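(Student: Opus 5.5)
The plan is to derive the condition number bound directly from Corollary~\ref{cor:relative_forward_error} (equivalently, from Theorem~\ref{thm:equilibrium_displacement}) by dividing by $\norm{\DW}_2$ and passing to the limit. For every perturbation with $\norm{\DW}_2 < m$, the quantized equilibrium $\zqstar$ exists and is unique by Theorem~\ref{thm:weight_quant_bound}. The ratio in the definition of $\kappaAbs$ is therefore well defined for all sufficiently small $\DW$, and the $\limsup$ only involves such perturbations.

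First, I would rewrite Corollary~\ref{cor:relative_forward_error} in absolute form, which holds for all $0 < \norm{\DW}_2 < m$:
\begin{equation*}
    \frac{\norm{\zqstar - \zstar}_2}{\norm{\DW}_2} \;\le\; \frac{\norm{\zstar}_2}{m - \norm{\DW}_2}.
\end{equation*}
The right-hand side depends on $\DW$ only through the scalar $\norm{\DW}_2$. It is continuous and increasing in that scalar on $[0,m)$, and it tends to $\norm{\zstar}_2/m$ as $\norm{\DW}_2 \to 0$.

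Next, I would take the $\limsup$ as $\norm{\DW}_2 \to 0$, understood as $\lim_{\epsilon\to 0}\sup\{\cdot : 0<\norm{\DW}_2\le\epsilon\}$. The supremum over the ball of radius $\epsilon$ is bounded by $\norm{\zstar}_2/(m-\epsilon)$, and letting $\epsilon \to 0$ gives $\kappaAbs \le \norm{\zstar}_2/m$.

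As a cross-check, I could instead start from Theorem~\ref{thm:equilibrium_displacement}, which gives the ratio bound $\norm{\zqstar}_2/m$. I would then show continuity $\zqstar \to \zstar$: since the displacement is at most $\norm{\DW}_2\norm{\zstar}_2/(m-\norm{\DW}_2)$, it tends to $0$. This recovers the same limit.

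The only subtle point is this continuity, or uniformity, step. Theorem~\ref{thm:equilibrium_displacement} alone involves the unknown $\norm{\zqstar}_2$, which must be controlled uniformly over all small perturbations. Corollary~\ref{cor:relative_forward_error} handles this, because its bound is expressed purely in terms of $\norm{\zstar}_2$, $m$ and $\norm{\DW}_2$. If $\zstar = 0$, the bound gives $\zqstar = 0$ for all small $\DW$, so $\kappaAbs = 0$, consistent with the claim.
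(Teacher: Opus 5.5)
Your proposal is correct and follows essentially the same route as the paper. Both arguments establish $\norm{\zqstar - \zstar}_2/\norm{\DW}_2 \le \norm{\zstar}_2/(m - \norm{\DW}_2)$ for $\norm{\DW}_2 < m$ and then let $\norm{\DW}_2 \to 0$: the paper gets there by combining Theorem~\ref{thm:equilibrium_displacement} with $\norm{\zqstar}_2 \le \norm{\zstar}_2/(1-\norm{\DW}_2/m)$ from Corollary~\ref{cor:relative_forward_error}, while you read it off that corollary directly, and your explicit $\limsup$-over-shrinking-balls step and treatment of $\zstar = 0$ just make rigorous what the paper leaves implicit.
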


\begin{proof}
From Theorem~\ref{thm:equilibrium_displacement},
$\norm{\zstar - \zqstar}_2/\norm{\DW}_2 \leq \norm{\zqstar}_2/m$.
By Corollary~\ref{cor:relative_forward_error}, $\norm{\zqstar}_2 \leq \norm{\zstar}_2/(1 - \norm{\DW}_2/m)$, so
$\norm{\zstar - \zqstar}_2/\norm{\DW}_2 \leq \norm{\zstar}_2/(m - \norm{\DW}_2)$.
Taking $\norm{\DW}_2 \to 0$ gives $\kappaAbs \leq \norm{\zstar}_2/m$.
\end{proof}

In words, the equilibrium's sensitivity to weight perturbation is governed by the ratio of its magnitude to the margin. The relative condition number $\kappaRel \le \norm{W}_2/m$ gives $\norm{\zstar - \zqstar}_2/\norm{\zstar}_2 \leq \kappaRel\,\eta_W$ to first order, where $\eta_W := \norm{\DW}_2/\norm{W}_2$; on the trained MNIST model $\kappaRel \approx 7.6$. A sufficient pre-deployment check is $\varepsilon_W < m$, and a single margin check $\mq > 0$ guarantees both forward and backward convergence (Theorem~\ref{thm:backward_quantized}).

Unlike feedforward networks, where rounding errors accumulate through $L$ layers as $O(Lu)$~\cite{beuzeville_deterministic_2025}, contractivity bounds the error here regardless of iteration count: $\zqstar$ is exact for $I - \Wq$. The next subsection shows the backward solve inherits these guarantees verbatim.

\subsection{Backward Pass Under Quantization}\label{subsec:backward}

Training a MonDEQ requires gradients of the loss with respect to the parameters $\boldsymbol{\vartheta} = (W, U, b)$, computed by implicit differentiation through the equilibrium condition $0 \in F(\zstar;\boldsymbol{\vartheta}) + G(\zstar)$. The key observation is that the resulting backward problem is itself a monotone inclusion with the same linear part $(I - W)$ as the forward problem, and therefore inherits the same margin and convergence guarantees.

Differentiating $F(\zstar;\boldsymbol{\vartheta}) + g^\star = 0$ in $\boldsymbol{\vartheta}$ via the chain rule produces the backward inclusion $0 \in (I-W)p - r + G_b(p)$, where $p := \td{\zstar}{\boldsymbol{\vartheta}}$ is the backward sensitivity, $r := \td{W}{\boldsymbol{\vartheta}}\zstar + \td{U}{\boldsymbol{\vartheta}}x + \td{b}{\boldsymbol{\vartheta}}$ collects the parameter-derivatives of the affine forcing, and $G_b \in \partial_C G(\zstar) \subseteq \mathbb{R}^{n \times n}$ is an element of the Clarke generalized Jacobian --- the convex hull of limits of Jacobians at points of differentiability --- satisfying $\sym(G_b) \succeq 0$~\cite{combettes_deep_2019}. The following theorem shows this structure is preserved under weight quantization.

\begin{theorem}\label{thm:backward_quantized}
Let $\Wq = W + \DW$ with $\norm{\DW}_2 < m$, and let $\zqstar$ solve $0 \in \Fq(\zqstar;\boldsymbol{\vartheta}) + G(\zqstar)$. Define $\widetilde{p} := \td{\zqstar}{\boldsymbol{\vartheta}}$, $\widetilde{r} := \td{\Wq}{\boldsymbol{\vartheta}}\zqstar + \td{U}{\boldsymbol{\vartheta}}x + \td{b}{\boldsymbol{\vartheta}}$, and let $\widetilde{G}_b \in \partial_C G(\zqstar) \subseteq \mathbb{R}^{n \times n}$ with $\sym(\widetilde{G}_b) \succeq 0$. Then $\widetilde{p}$ solves
\begin{equation}
  0 \in (I-\Wq)\widetilde{p} - \widetilde{r} + \widetilde{G}_b(\widetilde{p}),
  \label{eq:backward_inclusion}
\end{equation}
and under the stepsize hypothesis of Corollary~\ref{cor:quantized_fb_contraction}, the splitting method converges to $\widetilde{p}$ with the perturbed parameters $(\mq,\Lq)$ from Theorem~\ref{thm:weight_quant_bound}. In particular, if the forward pass converges ($\mq > 0$), then the backward pass also converges with the same contraction modulus; a single margin check suffices for both passes.
\end{theorem}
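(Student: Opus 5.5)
The plan has two parts. First, derive the inclusion \eqref{eq:backward_inclusion} by implicit differentiation of the quantized equilibrium condition. Second, show the resulting linear monotone inclusion has the same margin and Lipschitz structure as the quantized forward problem, so that Theorem~\ref{thm:weight_quant_bound} and Corollary~\ref{cor:quantized_fb_contraction} apply verbatim.

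For the first part, start from $\Fq(\zqstar;\boldsymbol{\vartheta}) + \widetilde g^\star = 0$ with $\widetilde g^\star \in G(\zqstar)$. Since $\mq>0$ (Theorem~\ref{thm:weight_quant_bound}), the quantized equilibrium is unique. Write it as a fixed point of $\widetilde\Phi_{\mathrm{FB}}$, which is a contraction by Corollary~\ref{cor:quantized_fb_contraction}, so $\zqstar$ depends locally Lipschitz on $\boldsymbol{\vartheta}$. By Rademacher and the Clarke implicit function theorem, it then admits a generalized derivative. Differentiating $(I-\Wq)\zqstar - (Ux+b)$ by the product rule gives $(I-\Wq)\widetilde p - \widetilde r$. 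The activation term differentiates through the chain rule as $\widetilde G_b \widetilde p$ with $\widetilde G_b \in \partial_C G(\zqstar)$. This yields \eqref{eq:backward_inclusion}, exactly mirroring the unquantized derivation with $W$ replaced by $\Wq$ and $\zstar$ by $\zqstar$.

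For the second part, observe that \eqref{eq:backward_inclusion} has the form $0 \in \Fq_b(p) + \widetilde G_b(p)$ with $\Fq_b(p) := (I-\Wq)p - \widetilde r$. This is affine with the same linear part as $\Fq$, so its strong monotonicity margin is $\lmin(\sym(I-\Wq)) = \mq \ge m - \norm{\DW}_2 > 0$, and its Lipschitz constant is $\Lq$. The operator $p \mapsto \widetilde G_b p$ is linear with $\sym(\widetilde G_b)\succeq 0$, hence monotone. It is also continuous and everywhere defined, hence maximal monotone, and its resolvent is single-valued and firmly nonexpansive. The hypotheses of the forward--backward convergence result (and of Peaceman--Rachford) therefore hold with parameters $(\mq,\Lq)$. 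For $\alpha\in(0,2\mq/\Lq^2)$ the backward iteration is a contraction with modulus $r_{\mathrm{FB}}(\alpha;\mq,\Lq)$, which is identical to the forward modulus, and its unique fixed point is $\widetilde p$. The forcing $\widetilde r$ only shifts the fixed point, as in Corollary~\ref{cor:full_parameter_displacement}, and does not affect the rate. So the single check $\mq>0$, guaranteed by $\norm{\DW}_2<m$, certifies both passes.

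The main obstacle is the first step: justifying the differentiation when $G$ is set-valued or nonsmooth, as with ReLU. The identification of the derivative with an element of $\partial_C G$ holds only in a generalized or almost-everywhere sense. I would handle this by working with the single-valued resolvent form $\zqstar = \res{\alpha G}(\zqstar - \alpha \Fq(\zqstar))$ and invoking the nonsmooth implicit differentiation result of~\cite{combettes_deep_2019}. I would take that result as the definition of $\widetilde G_b$ and rely on it, rather than reproving it, and then note that invertibility of the resulting linear system follows from $\mq>0$.
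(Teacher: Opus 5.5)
Your proposal is correct and follows essentially the same route as the paper. You differentiate the quantized equilibrium condition to obtain \eqref{eq:backward_inclusion}, note that the backward operator $(I-\Wq)p-\widetilde r$ has the same linear part as $\Fq$ and so inherits $(\mq,\Lq)$, and use $\sym(\widetilde G_b)\succeq 0$ to make $\widetilde G_b$ monotone so that the same splitting contracts at the same rate. Your extra steps fill in details the paper leaves implicit without changing the argument: Lipschitz dependence of $\zqstar$ on $\boldsymbol{\vartheta}$ via the uniform contraction, maximality of $\widetilde G_b$ from continuity, and invertibility of the linearized system from $\mq>0$.
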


\begin{proof}
Differentiating $\Fq(\zqstar;\boldsymbol{\vartheta}) + \widetilde{g}^\star = 0$ with respect to $\boldsymbol{\vartheta}$ yields~\eqref{eq:backward_inclusion}. The backward operator $(I - \Wq)p - \widetilde{r}$ has the same linear part as $\Fq$, so it inherits the same $(\mq, \Lq)$ from Theorem~\ref{thm:weight_quant_bound}. Since $\sym(\widetilde{G}_b) \succeq 0$ by hypothesis --- equivalently, $\widetilde{G}_b$ is monotone as a linear operator --- the same splitting method converges.
\end{proof}

Theorem~\ref{thm:backward_quantized} validates quantization-aware training (QAT): whenever the forward pass converges under quantized weights, gradients can be computed at the same precision and with the same iteration budget. No additional solver resources are required for the backward pass.

The gradient error under quantization has two sources: the displaced equilibrium ($\zstar \to \zqstar$) and the perturbed weight matrix ($W \to \Wq$). By Theorem~\ref{thm:backward_quantized}, the backward sensitivity $\widetilde{p}$ solves a monotone inclusion with the same linear operator $(I - \Wq)$, so the backward equilibrium exists and can be computed by the same splitting method. Since both sources introduce perturbations of size $O(\norm{\DW}_2)$ (the weight perturbation directly, and the equilibrium displacement via Theorem~\ref{thm:equilibrium_displacement}), the chain rule gives $\norm{\pd{\ell}{W} - \pd{\ell}{\Wq}}_2 = O(\norm{\DW}_2)$.

\section{Numerical Experiments}\label{sec:experiments}

We validate the predictions of Section~\ref{sec:quant-mondeq} on a single-layer MonDEQ with $n = 100$ hidden units trained on MNIST (Adam, $\mathrm{lr}=10^{-3}$, 15 epochs, step decay $\gamma=0.1$ at epoch~10). Unlike~\cite{winston_monotone_2021}, which fixes $m$, we treat $m$ as learnable via $m = \operatorname{softplus}(m_{\mathrm{raw}})$ with $m_{\mathrm{raw}} \in \R$, ensuring $m > 0$. The trained model achieves $98.22\%$ test accuracy with $m = 0.227$, $L = 1.845$, $\kappa = L/m = 8.13$. Post-training quantization (PTQ) applies symmetric uniform quantization with step $\Delta = 2^{1-b}$ and per-tensor scaling, without calibration or bias correction~\cite{jacob_quantization_2017, nagel_white_2021}; QAT retrains from scratch with a straight-through estimator. The forward--backward solver terminates when the relative residual falls below $10^{-5}$ or after 2000 iterations.

\begin{figure}[t]
\centering
\includegraphics[width=\columnwidth]{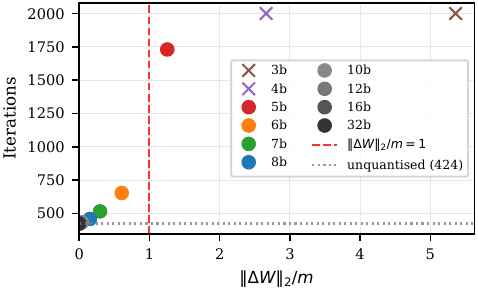}\\
\includegraphics[width=\columnwidth]{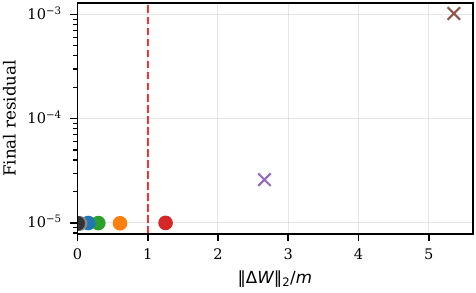}
\caption{Margin stability certificate. Iterations to convergence (top) and final residual (bottom) vs.\ normalized perturbation $\norm{\DW}_2/m$; each point is one bit-width (3--32~bits). The vertical dashed line marks the sufficient condition $\norm{\DW}_2/m = 1$; the horizontal dotted line in the top panel is the unquantized baseline (424 iterations). Circles: converged (relative residual ${<}\,10^{-5}$); crosses: did not converge within 2000 iterations.}
\label{fig:margin_stability}
\end{figure}

\textbf{Margin stability certificate.}
Figure~\ref{fig:margin_stability} tests the convergence condition $\norm{\DW}_2 < m$ from Theorem~\ref{thm:weight_quant_bound} across bit-widths 3--32. The non-convergence/convergence transition aligns with $\norm{\DW}_2/m = 1$: 3-bit (ratio $5.36$) and 4-bit ($2.66$) diverge, 5-bit and above converge. The 5-bit case (ratio $1.25$, $\mq = 0.045 > 0$) illustrates that the condition is sufficient but not necessary: the actual margin remains positive, so the solver converges despite the sufficient condition being violated. Iteration count reflects the degraded margin (5-bit ${\sim}1730$, 8-bit ${\sim}450$). At 8 bits, weight storage drops $4\times$ versus 32-bit floating-point with $98.24\%$ vs.\ $98.22\%$ accuracy.

\begin{figure}[t]
\centering
\includegraphics[width=\columnwidth]{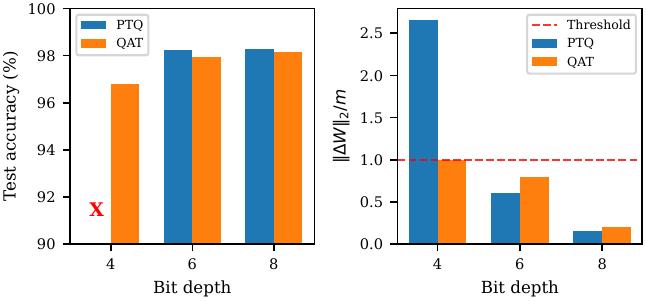}
\caption{QAT vs.\ PTQ at 4, 6, and 8~bits. Left: test accuracy (\%; a red~X indicates PTQ non-convergence at 4~bits). Right: $\norm{\DW}_2/m$; the dashed line marks $\norm{\DW}_2/m = 1$.}
\label{fig:qat_ptq}
\end{figure}

\begin{figure}[t]
\centering
\includegraphics[width=\columnwidth]{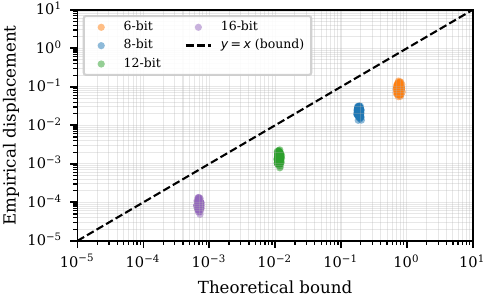}
\caption{Displacement bound validation at 6, 8, 12, and 16~bits with $W$, $U$, and $b$ all quantized. Each point is one test sample; axes show relative quantities (log--log). The $x$-axis is the Corollary~\ref{cor:full_parameter_displacement} bound $(\norm{\DW}_2\norm{\zqstar}_2 + \norm{\Delta u}_2)/(m\norm{\zstar}_2)$ with $\Delta u := \Delta U\,x + \Delta b$; the $y$-axis is the empirical relative displacement $\norm{\zqstar - \zstar}_2/\norm{\zstar}_2$. Points below the dashed line ($y = x$) satisfy the bound.}
\label{fig:splitting_displacement}
\end{figure}

\textbf{QAT vs.\ PTQ.}
Theorem~\ref{thm:backward_quantized}'s backward-pass guarantee makes QAT well-defined (it requires differentiating through the equilibrium). Figure~\ref{fig:qat_ptq} compares PTQ and QAT at 4, 6, 8~bits. PTQ fails at 4 bits ($\mq = -0.142$); QAT learns weights with $\mq = 0.006 > 0$, achieving $96.78\%$ accuracy at a smaller margin ($m = 0.184$ vs.\ $0.227$). At 6--8 bits both methods converge, with PTQ slightly higher ($98.25/98.29\%$) by inheriting the larger float margin.

\textbf{Displacement bound validation.}
The preceding experiments test \emph{convergence}; we now test the \emph{accuracy} of the converged equilibrium. Theorem~\ref{thm:equilibrium_displacement} bounds the displacement between exact equilibria; the forward--backward solver terminates at finite tolerance, so a Cauchy--Schwarz residual-to-state argument $\norm{\hat z - \zstar}_2 \le \norm{F(\hat z) + \hat g}_2/m$ for any $\hat g \in G(\hat z)$ (immediate from $m$-strong monotonicity, cf.~\cite[Sec.~5.5]{bauschke_convex_2017}) combined with Theorem~\ref{thm:equilibrium_displacement} gives a corrected observable bound that absorbs the solver tolerance. Figure~\ref{fig:splitting_displacement} illustrates Corollary~\ref{cor:full_parameter_displacement}'s bound on 2{,}560 randomly sampled test inputs at 6, 8, 12, and 16~bits, with $W$, $U$, $b$ all quantized at the same bit-width. The maximum $\norm{\Delta u}_2$ ranges from $2.44$ (6-bit) to $0.002$ (16-bit), and the empirical displacement is $3$--$10\times$ lower than the bound for every sample.

\section{Conclusions}

We have analyzed the effect of weight quantization on monotone operator equilibrium networks through spectral perturbation of the monotone inclusion. The monotonicity margin~$m$ emerges as the single quantity governing robustness to quantization: convergence of the forward and backward solvers is guaranteed provided $\norm{\DW}_2 < m$ (Theorem~\ref{thm:weight_quant_bound}), the equilibrium displacement satisfies $\norm{\zqstar - \zstar}_2 \le (\norm{\DW}_2/m)\norm{\zqstar}_2$ (Theorem~\ref{thm:equilibrium_displacement}), and the relative condition number $\kappaRel = \norm{W}_2/m$ links bit-width to forward error (Theorem~\ref{thm:condition_number}). Experiments confirm a phase transition at the predicted threshold and show the displacement bound holds on every tested sample across 6--16 bits, with a conservative factor of $3$--$10\times$. Quantization-aware training recovers convergence at 4~bits where post-training quantization fails, enabled by the backward-pass guarantee of Theorem~\ref{thm:backward_quantized}.

The analysis is limited to uniform symmetric quantization of a single-layer MonDEQ; natural extensions include per-channel and mixed-precision schemes, multi-layer architectures, and margin-aware regularization. An important open question is whether the structural guarantees of equilibrium-based control components survive weight quantization. Recurrent equilibrium networks (RENs)~\cite{revay_recurrent_2024} --- the related dynamic architecture in which equilibrium-based controllers are currently deployed~\cite{junnarkar_synthesis_2022, wang_learning_2022} --- are the natural next target, and the bounds here are a first step. Another avenue for future work is to extend these results to realizations of MonDEQs on quantized analog hardware \cite{chaffey_circuit_2025}.


\section*{ACKNOWLEDGMENT}

Generative AI was used to assist with the experimentation code, finding references, and checking for grammatical errors.

\bibliographystyle{IEEEtran}
\bibliography{bibliography}

\end{document}